\newtheorem{theorem}{Theorem}[section]
\newtheorem{lemma}[theorem]{Lemma}
\newtheorem{corollary}[theorem]{Corollary}
\theoremstyle{definition}
\newtheorem{example}[theorem]{Example}
\newtheorem{remark}[theorem]{Remark}
\numberwithin{equation}{section}
\begin{document}
	
	\title[Some extensions from famous theorems]{Some extensions from famous theorems for $h$-mid-convex function}
	
	\author[Garejelo, Mirzapour, Morassaei]{Amir Garejelo$^1$, Farzollah Mirzapour$^1$ and Ali Morassaei$^1$}
	\vspace{-2cm}
	\address{$^{1}$ Department of Mathematics, Faculty of Sciences, University of Zanjan, P. O. Box 45195-313, Zanjan, Iran.}
	
	\email{amirgharajelo@znu.ac.ir}
	\email{f.mirza@znu.ac.ir}
	\email{morassaei@znu.ac.ir}
	
	
	\subjclass[2020]{26A51, 26D15.}
	
	\keywords{$h$-convex function, $h$-mid-convex function, Jensen's theorem, Ostrowski's theorem, Blumberg-Sierpinski theorem, Bernstein-Doetsch theorem and Mehdi theorem.}
	
	\begin{abstract}
	In this paper, we prove that every continuous $h$-mid-convex with suitable conditions on $h$ is $h$-convex function. Also, we extend Ostrowski theorem, Blumberg-Sierpinski theorem, Bernstein-Doetsch theorem, Mehdi theorem.
\end{abstract} \maketitle
\section{Introduction}
Throughout this paper, we assume that $I, J\subseteq\mathbb{R}$ are intervals such that $0\in J$. A function $f:I\longrightarrow\mathbb{R}$ is called \textit{convex function}, if for every $x, y\in I$ and $\lambda\in [0, 1]$ the following inequality holds:
$$
f\big(\lambda x+(1-\lambda)y\big)\le\lambda f(x)+(1-\lambda)f(y)\,.
$$
The function $f$ is said to be \textit{concave}, if $-f$ is convex. The function $f$ is called \textit{J-convex (Jensen convex function)} or \textit{mid-convex}, if for every $x,y \in I$,
$$
f\left(\frac{x+y}{2}\right)\le\frac{f(x)+f(y)}{2}\,.
$$
We say that $\sum_{i=1}^{n}\lambda_ix_i$ is an \textit{rational convex combination} of the points $x_1,\cdots,x_n\in I$, if $\lambda_i$ is non-negative rational number for $i=1,\cdots,n$ and $\sum_{i=1}^{n}\lambda_i=1$.

It is obvious that $f$ is mid-convex on $I$, if and only if for any rational convex combination of points in $I$, 
\begin{equation}\label{eq1.1}
	f\left(\sum_{i=1}^{n}\lambda_ix_i\right)\le\sum_{i=1}^{n}\lambda_if(x_i)\,.
\end{equation}
Inequality \eqref{eq1.1} is called \textit{Jensen's inequality}. We know that if $f$ is mid-convex, then such functions need not be continuous, but that when they are, they will be convex.

Chademan and Mirzapour in year 1999 \cite{CM}, defined the mid-convex function on a topological group and then extended the theorems of Bernstein-Doetsch, and Ostrowski, concerning the continuity of mid-convex functions are extended to open subsets of locally compact and root-approximable topological groups.

Let us recall definitions of some special classes of functions.

We say that \cite{GL} $f:I\to\mathbb R$ is a \textit{Godunova-Levin function} or that $f$ belongs to the class $Q(I)$ if $f$ is non-negative and for all $x, y\in I$ and $t\in(0,1)$ we have
$$
f(t x+(1-t)y) \le \frac{f(x)}{t}+\frac{f(y)}{1-t}\,.
$$
For $s\in(0, 1]$, a function $f:[0, \infty)\to[0, \infty)$ is said to be \textit{$s$-convex}, or that $f$ belongs to the class $K_s^2$, if 
$$
f(t x+(1-t)y) \le t^sf(x)+(1-t)^sf(y)
$$
for every $x, y\in [0, \infty)$ and $t\in[0, 1]$ (see \cite{Bre}). Also, we say that $f:I\to[0, \infty)$ is a \textit{$P$-function} \cite{DPP}, or that $f$ belongs to the class $P(I)$, if for all $x, y\in I$ and $t\in[0, 1]$ it holds that
$$
f(t x+(1-t)y) \le f(x)+f(y)\,.
$$

In \cite{VAR}, Varo\v{s}anec defined  the $h$-convex function as follows:\\
Let $h:J\to\mathbb{R}$ be a non-negative function, $h\not\equiv 0$. We say that $f:I\to\mathbb{R}$ is a \textit{$h$-convex function}, or that $f$ belongs to the class $SX(h, I)$, if $f$ is non-negative and for all $x, y\in I$, $t\in(0, 1)$ we have 
\begin{equation}\label{eq1111}
	f(t x+(1-t)y) \le h(t)f(x)+h(1-t)f(y)\,.
\end{equation}
If inequality \eqref{eq1111} is reversed, then $f$ is said to be \textit{$h$-concave}, that is, $f\in SV(h, I)$. 

If $h(t)=t$, then $SX(h, I)$ is exactly equal to the set of all non-negative convex functions defined on the interval $I$, and the set of all non-negative concave functions is equal to $SV(h, I)$. If $h(t)=\frac{1}{t}$, then $SX(h, I)=Q(I)$; if $h(t)=1$, then $SX(h, I)\supseteq P(I)$; and if $h(t)=t^s$, where $s\in(0, 1)$, then $SX(h, I)\supseteq K_s^2$.

Also, $f$ is called \textit{$h$-mid-convex function}, if for every $x, y\in I$, the following inequality holds:
$$
f\left(\frac{x+y}{2}\right)\le h\Big(\frac{1}{2}\Big)\big(f(x)+f(y)\big)\,.
$$

If the interval  $J$ is  closed under addition, then a function $h:J\to\mathbb{R}$ is said to be a \textit{super-additive function} if 
\begin{equation}\label{SA}
	h(x+y) \ge h(x)+h(y)\,,
\end{equation}
for all $x, y\in J$. If inequality \eqref{SA} is reversed, then $h$ is said to be a \textit{sub-additive function}. If the equality holds in \eqref{SA}, then $h$ is said to be an \textit{additive function}.

If the domain of a function $h$ is closed under multiplication, then  $h$ is called a \textit{super-multiplicative function} if 
\begin{equation}\label{SM}
	h(xy) \ge h(x)h(y)\,,
\end{equation}
for all $x, y\in J$ \cite{VAR}. If inequality \eqref{SM} is reversed, then $h$ is called a \textit{sub-multiplicative function}. If the equality holds in \eqref{SM}, then $h$ is called a \textit{multiplicative function}.
\begin{example}\cite{VAR}
	Consider the function $h:[0, +\infty)\to\mathbb{R}$ by $h(x)=(c+x)^{p-1}$, where $p$ is a non-negative real number. If $c=0$, then the function $h$ is multiplicative. If $c\ge 1$, then for $p\in(0, 1)$ the function $h$ is super-multiplicative and for $p>1$ the function $h$ is sub-multiplicative.
\end{example}
Sarikaya et al. presented some inequalities Hadamard-type for $h$-convex functions. They also proved some Hadamard-type inequalities for products of two $h$-convex functions \cite{SSO}.

Dragomir gave some inequalities of Hermite-Hadamard type for $h$-convex functions defined on convex subsets in real or complex linear spaces. Also, he provides applications for norm inequalities \cite{D1}.

Darvish et al. \cite{DNT} introduced the concept of operator $h$-convex functions for positive linear maps, and proved some Hermite-Hadamard type inequalities for these functions. As applications, they obtained several trace inequalities for operators.

In 2022 \cite{AMM}, Abbasi and et al. give some characterizations of $h$-convex functions and then generalize the Jensen-Mercer inequality for these functions. Also, the authors present a refinement of Jensen's inequality for $h$-convex function.

Morassaei and Mirzapour in \cite{MM} stated some integral inequalities for $h$-log-convex functions involving classical Hermite–Hadamard inequality. Also, by using notations of Mond–P\v{e}cari\'{c} method in operator inequalities, they extended some inequalities for operator $h$-log-convex functions.

In this paper, we show that any $h$-mid-convex function under suitable conditions is $h$-convex. Then, we extend Jensen's theorem, Bernstein-Doetsch theorem, Blumberg-Sierpinski theorem, Ostrowski theorem, and Mehdi theorem for $h$-mid-convex function.

\section{Results on $h$-mid-convexity and $h$-convexity}
We know that every continuous mid-convex function is a convex function \cite{RV}. In this section, we give two proofs for the generalization of this statement for $h$-mid-convex function as follows.
\begin{theorem}
	Let $h: \mathbb{R}^+\to \mathbb{R}^+$ be a super-multiplicative and continuous function such that $h(t)\geq t$ and $ f $ be a continuous $h$-mid-convex function on $[a,b]$, then $f$ is $h$-convex.
\end{theorem}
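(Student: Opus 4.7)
The strategy is a standard Jensen--density argument, adapted so that super-multiplicativity and the pointwise bound $h(t)\ge t$ absorb the multiplicative constants that arise from iterating $h$-mid-convexity. First I would establish, by induction on $n$, the dyadic Jensen-type inequality
$$
f\!\left(\frac{1}{2^n}\sum_{i=1}^{2^n} x_i\right)\le h(1/2)^n\sum_{i=1}^{2^n} f(x_i)
$$
for any $x_1,\ldots,x_{2^n}\in[a,b]$. The base case $n=1$ is precisely $h$-mid-convexity, and the inductive step splits the sum into two halves, applies mid-convexity to the midpoint of their averages, and then the inductive hypothesis on each half.

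Specializing to $x_1=\cdots=x_k=x$ and $x_{k+1}=\cdots=x_{2^n}=y$ yields, for each dyadic $t=k/2^n\in(0,1)$,
$$
f\bigl(tx+(1-t)y\bigr)\le k\,h(1/2)^n f(x)+(2^n-k)\,h(1/2)^n f(y).
$$
Next I would show that the two coefficients are dominated by $h(t)$ and $h(1-t)$. Iterated super-multiplicativity gives $h(1/2^n)\ge h(1/2)^n$, and one further application, combined with $h(k)\ge k$, yields
$$
h(k/2^n)\ge h(k)\,h(1/2^n)\ge k\,h(1/2)^n,
$$
and similarly $h((2^n-k)/2^n)\ge (2^n-k)\,h(1/2)^n$. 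Since $f\ge 0$ (non-negativity is built into the target notion of $h$-convexity), these coefficient comparisons may be multiplied by $f(x)$ and $f(y)$ without reversing the inequality, giving
$$
f\bigl(tx+(1-t)y\bigr)\le h(t)f(x)+h(1-t)f(y)\qquad\text{for every dyadic }t\in(0,1).
$$

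Finally, the extension to arbitrary $t\in(0,1)$ is routine by density: choose dyadic $t_n\to t$ and let $n\to\infty$ in the preceding inequality. Continuity of $f$ sends $f(t_n x+(1-t_n)y)$ to $f(tx+(1-t)y)$, while continuity of $h$ sends $h(t_n)$ and $h(1-t_n)$ to $h(t)$ and $h(1-t)$, proving $h$-convexity. The main technical point is the middle step: recognizing that the hypotheses on $h$ are tailored precisely so that super-multiplicativity compounds with $h(t)\ge t$ to convert the crude iterated bound $k\,h(1/2)^n$ into the sharper $h(k/2^n)$. Without either hypothesis the dyadic estimate produced by iterating mid-convexity is too weak to give the desired $h$-convex inequality, so this is where the assumptions of the theorem are genuinely used.
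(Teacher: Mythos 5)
Your plan is correct, and it takes a genuinely more streamlined route than the paper. The paper proves the equal-weight Jensen inequality $f\big(\tfrac1n\sum x_i\big)\le h(\tfrac1n)\sum f(x_i)$ for \emph{every} $n$ by forward--backward (Cauchy) induction --- the backward step requiring the auxiliary estimate $h(\tfrac{n}{n-1})\big(1-h(\tfrac1n)\big)\ge 1$ --- then passes to arbitrary rational convex combinations $\lambda_i=p_i/m$ via $p_i\,h(\tfrac1m)\le h(p_i)h(\tfrac1m)\le h(\lambda_i)$, and finally uses density of the rationals together with continuity of $f$ and $h$. You instead stop at powers of two: the dyadic inequality $f\big(2^{-n}\sum_{i=1}^{2^n}x_i\big)\le h(\tfrac12)^n\sum f(x_i)$ needs only the easy half of the induction, and specializing to $k$ copies of $x$ and $2^n-k$ copies of $y$ gives the two-point bound for dyadic $t=k/2^n$, with the coefficient absorption $k\,h(\tfrac12)^n\le h(k)\,h(2^{-n})\le h(k/2^n)$ playing exactly the role that the paper's rational-weight step plays (same mechanism, super-multiplicativity plus $h(t)\ge t$, but with the extra iterated bound $h(2^{-n})\ge h(\tfrac12)^n$); density of the dyadics then replaces density of the rationals. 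What you lose is the intermediate multi-point rational Jensen inequality, which the paper needs again later (its Theorem 2.3, stated without continuity), but for the theorem as stated your argument is complete and avoids the most delicate part of the paper's proof, namely the backward induction and its sign/positivity bookkeeping. One small point to make explicit if you write this up: like the paper, you need $f\ge 0$ when multiplying the coefficient comparisons by $f(x)$ and $f(y)$; you flag this correctly (non-negativity is built into the definition of $SX(h,I)$), and the paper relies on the same implicit assumption, so this is not a gap relative to the source.
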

\begin{proof}
	We prove the theorem in two cases:\\
	\textbf{Case 1.} For $1\leq i\leq n$, set $\lambda_i=\dfrac{1}{n}$ and by induction, we show that 
	\begin{equation}\label{eq11}
		f\left(\dfrac{1}{n}\sum_{i=1}^nx_i\right)\leq h\big(\dfrac{1}{n}\big)\sum_{i=1}^nf(x_i)\,.
	\end{equation}
	For $ n=2 $, we have
	\begin{align*}
		f(\lambda_1x_1+\lambda_2x_2)=f\left(\dfrac{x_1+x_2}{2}\right)\leq h(\dfrac{1}{2})(f(x_1)+f(x_2))\,,
	\end{align*}
	this inequality is obvious.
	
	We assume that the inequality \eqref{eq11} holds for $n=2^k$, where $k\in \mathbb{N}$. We show that \eqref{eq11} holds for $m=2n$.
	\begin{align*}
		f\bigg(\dfrac{\sum_{i=1}^mx_i}{m}\bigg)&=f\bigg(\dfrac{\sum_{i=1}^{2n}x_i}{2n}\bigg)=f\bigg(\dfrac{\frac{1}{n}\sum_{i=1}^nx_i+\frac{1}{n}\sum_{i=n+1}^{2n}x_i}{2}\bigg)\\
		& \leq h(\frac{1}{2})\left(f\left(\dfrac{1}{n}\sum_{i=1}^{n}x_i\right)+f\left(\frac{1}{n}\sum_{i=n+1}^{2n}x_i\right)\right)\\
		& \leq h(\dfrac{1}{2})h(\dfrac{1}{n})\sum_{i=1}^{2n}f(x_i)\\
		& \leq h(\dfrac{1}{2n})\sum_{i=1}^{2n}f(x_i)\\
		& =h(\dfrac{1}{m})\sum_{i=1}^mf(x_i)\,.
	\end{align*}
	Now, we show that if \eqref{eq11} is true for $ n\geq 3 $,  then \eqref{eq11} holds for $ n-1 $. Define
	\begin{align*}
		x_n:=\dfrac{x_1+x_2+\cdots+x_{n-1}}{n-1}\,,
	\end{align*}
	then, we have
	\begin{align*}
		f\bigg(\dfrac{\sum_{i=1}^nx_i}{n}\bigg)&=f\bigg(\dfrac{\sum_{i=1}^{n-1}x_i+\frac{\sum_{i=1}^{n-1}x_i}{n-1}}{n}\bigg)=f\bigg(\dfrac{x_1+x_2+\cdots+x_{n-1}}{n-1}\bigg)\\
		&\leq h(\dfrac{1}{n})\bigg(\sum_{i=1}^{n-1}f(x_i)+f\Big(\dfrac{\sum_{i=1}^{n-1}x_i}{n-1}\Big)\bigg)\,.
	\end{align*}
	So,
	$$
	\bigg(1-h(\dfrac{1}{n})\bigg)f\left(\dfrac{x_1+x_2+\cdots+x_{n-1}}{n-1}\right)\leq h(\dfrac{1}{n})\sum_{i=1}^{n-1}f(x_i)\,.
	$$
	By multiplying two sides of the above inequality to positive number $h(\dfrac{n}{n-1})$, we have
	\begin{align*}
		f\left(\frac{1}{n-1}\sum_{i=1}^{n-1}x_i\right)\le &\bigg(h(\dfrac{n}{n-1})-h(\dfrac{1}{n})h(\dfrac{n}{n-1})\bigg)f\bigg(\dfrac{\sum_{i=1}^{n-1}x_i}{n-1}\bigg)\\
		\leq & h(\dfrac{1}{n})h(\dfrac{n}{n-1})\sum_{i=1}^{n-1}f(x_i)\\
		\leq & h(\dfrac{1}{n-1})\sum_{i=1}^{n-1}f(x_i)\,.
	\end{align*}
	Now, we show that
	\begin{equation}\label{I}
		h(\dfrac{n}{n-1})-h(\dfrac{1}{n})h(\dfrac{n}{n-1})\geq 1\,,
	\end{equation}
	for every $n\in\mathbb{N}$. Since $h$ is super-multiplicative function, we get  
	\begin{equation}\label{II}
		h(\dfrac{1}{n})h(\dfrac{n}{n-1})h(n)\leq h(\dfrac{n}{n-1})\,.
	\end{equation}
	Now, we reduce two sides of inequality from positive value $h(\dfrac{1}{n})h(\dfrac{n}{n-1})$, we have 
	\begin{equation}\label{III}
		h(\dfrac{1}{n})h(\dfrac{n}{n-1})h(n)-h(\dfrac{1}{n})h(\dfrac{n}{n-1})\leq h(\dfrac{n}{n-1})-h(\dfrac{1}{n})h(\dfrac{n}{n-1})\,,
	\end{equation}
	therefore
	\begin{equation}\label{IV}
		h(\dfrac{1}{n})h(\dfrac{n}{n-1})\big(h(n)-1\big)\leq h(\dfrac{n}{n-1})-h(\dfrac{1}{n})h(\dfrac{n}{n-1})\,.
	\end{equation}
	Since $h(t)\geq t$ for $t\in \mathbb{R}^+$, we have
	$$
	h(\dfrac{1}{n})h(\dfrac{n}{n-1})\big(h(n)-1)\big)\geq \dfrac{1}{n}\dfrac{n}{n-1}(n-1)= 1\,.
	$$
	Since \eqref{eq11} holds for every $n=2^k~~(k=1,2,3,\cdots)$, and by using the previous relation, \eqref{eq11} holds for $n=2^k-1$. Now, if assumption of induction holds for $n=2^k-\ell$, then by similar proof, \eqref{eq11} holds for $n=2^k-\ell-1$. Consequently, \eqref{eq11} holds for every $n\in\mathbb{N}$.
	
	\textbf{Case 2.} Let $\lambda_i\in \mathbb{Q}~(i=1,\cdots,n)$ such that $\sum_{i=1}^{n}\lambda_i=1$, then there exist $m\in\mathbb{N}$ and $p_i\in\mathbb{Z}^+$ such that $\lambda_i=\frac{p_i}{m}$, so $\sum_{i=1}^np_i=m$. By Case 1
	\begin{align*}
		f\left(\sum_{i=1}^n\lambda_ix_i\right)=&f\bigg(\dfrac{p_1x_1+p_2x_2+\cdots+p_nx_n}{m}\bigg)\\
		\leq&h(\dfrac{1}{m})\bigg(p_1f(x_1)+p_2f(x_2)+\cdots+p_nf(x_n)\bigg)\\
		=&h(\dfrac{1}{m})\sum_{i=1}^np_if(x_i)\,.
	\end{align*}
	Since $h(p_i)\geq p_i$ and $h$ is super-multiplicative function, we get
	$$
	f\left(\sum_{i=1}^n\lambda_ix_i\right)\leq h(\dfrac{1}{m})\sum_{i=1}^nh(p_i)f(x_i)
	\leq \sum_{i=1}^nh(\dfrac{p_i}{m})f(x_i)=\sum_{i=1}^nh(\lambda_i)f(x_i)\,.
	$$
	Let $\lambda\in [0,1]$, then there exists a sequence of rational numbers as $\{\lambda_n\}_{n=1}^{\infty}\in [0,1]\cap\mathbb{Q}$ such that $\lambda=\lim_{n\to \infty}\lambda_n$. Therefore, for every $n\in\mathbb{N}$ and $x, y\in[a, b]$,
	$$
	f(\lambda_nx+(1-\lambda_n)y)\leq h(\lambda_n)f(x)+h(1-\lambda_n)f(y)
	$$
	So
	\begin{align*}
		f(\lambda x+(1-\lambda)y)&=\lim_{n\to +\infty}f(\lambda_nx+(1-\lambda_n)y)\qquad (f~\text{is continuous})\\
		&\leq \lim_{n\to +\infty}h(\lambda_n)f(x)+\lim_{n\to +\infty}h(1-\lambda_n)f(y)\\
		&=h(\lambda)f(x)+h(1-\lambda)f(y)\qquad (h~\text{is continuous})\,.
	\end{align*}
	This complete the proof.
\end{proof}
\begin{remark}
	If in the previous theorem, the function $h$ is multiplicative, then the theorem holds.
\end{remark}

\begin{theorem}\label{t2.3}
	Let $h:\mathbb{R}^+\to \mathbb{R}^+$ be a super-multiplicative function such that $h(t)\geq t$ and $f$ be a $h$-mid-convex function on $[a,b]$. Then for every $\lambda_i\in\mathbb{Q}\cap [0, 1]$ such that $\sum_{i=1}^n\lambda_i=1$,
	\begin{equation}\label{eI}
		f\left(\sum_{i=1}^n\lambda_ix_i\right)\leq \sum_{i=1}^nh(\lambda_i)f(x_i)\,.
	\end{equation}
\end{theorem}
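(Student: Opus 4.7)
The plan is to reuse the two-case induction already executed in the proof of the preceding theorem, omitting only the closing continuity argument that pushed the conclusion from rational $\lambda$ to arbitrary $\lambda \in [0,1]$. That passage was the single place where continuity of $f$ (and of $h$) was invoked, so dropping it is precisely what the present, weaker hypotheses permit. The induction naturally splits into the equal-weight case and the reduction of arbitrary rational weights to equal weights.

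\textbf{Case 1 (equal weights).} For $\lambda_i = 1/n$, I would establish
$$f\!\left(\frac{1}{n}\sum_{i=1}^n x_i\right) \;\le\; h\!\left(\frac{1}{n}\right)\sum_{i=1}^n f(x_i)$$
by the same three-stage induction: the base case $n = 2$ is the $h$-mid-convexity hypothesis itself; the upward doubling $n \to 2n$ groups the $2n$ points into two $n$-averages, applies $h$-mid-convexity at the outer step together with the inductive hypothesis on each group, and collapses the constants via super-multiplicativity $h(1/2)\,h(1/n) \le h(1/(2n))$; the backward descent $n \to n-1$ uses the standard substitution $x_n := \frac{1}{n-1}\sum_{i=1}^{n-1} x_i$, isolates $f\bigl(\frac{1}{n-1}\sum_{i=1}^{n-1}x_i\bigr)$, and then multiplies through by $h(n/(n-1))$ to reach the target constant.

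The crux of the argument, and the main obstacle, is the inequality
$$h\!\left(\frac{n}{n-1}\right) - h\!\left(\frac{1}{n}\right)h\!\left(\frac{n}{n-1}\right) \;\ge\; 1,$$
on which the backward step rests. I would prove it exactly as in the preceding theorem: super-multiplicativity yields $h(1/n)\,h(n/(n-1))\,h(n) \le h(n/(n-1))$, and applying $h(t) \ge t$ to the three values $t = 1/n$, $n/(n-1)$, $n-1$ gives $h(1/n)\,h(n/(n-1))\,(h(n)-1) \ge \frac{1}{n}\cdot\frac{n}{n-1}\cdot(n-1) = 1$, which rearranges to the displayed bound. Everything else in Case 1 is bookkeeping.

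\textbf{Case 2 (general rational weights).} For $\lambda_i = p_i/m$ with $p_i \in \mathbb{Z}^+$ and $\sum_{i=1}^n p_i = m$, I would rewrite $\sum_{i=1}^n \lambda_i x_i$ as the equal-weight average of $m$ points, each $x_i$ appearing $p_i$ times, and invoke Case 1 to obtain $f\bigl(\sum_i \lambda_i x_i\bigr) \le h(1/m)\sum_i p_i f(x_i)$. The bound $p_i \le h(p_i)$ together with super-multiplicativity $h(1/m)\,h(p_i) \le h(p_i/m) = h(\lambda_i)$ then converts the right-hand side to $\sum_i h(\lambda_i) f(x_i)$, completing the proof. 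No limit is taken, so no continuity of $f$ or of $h$ is needed, which is exactly the gain over the previous theorem.
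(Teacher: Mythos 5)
Your proposal is correct and follows essentially the same route as the paper: dyadic doubling plus a backward/descending step whose validity rests on the bound $h\!\left(\tfrac{n}{n-1}\right)\bigl(1-h\!\left(\tfrac{1}{n}\right)\bigr)\ge 1$, deduced from super-multiplicativity together with $h(t)\ge t$, followed by the reduction of arbitrary rational weights to equal weights by repeating each $x_i$ according to its numerator. The paper's own proof of this theorem differs only cosmetically, padding the $n$-point average up to the next power of two in a single step via the analogous bound $h\!\left(\tfrac{2^m}{n}\right)\bigl(1-h\!\left(\tfrac{2^m-n}{2^m}\right)\bigr)\ge 1$, whereas your one-step descent $n\to n-1$ is exactly the argument of the preceding theorem with the continuity passage removed; the only slip is notational, since the third application of $h(t)\ge t$ should be at $t=n$ (yielding $h(n)-1\ge n-1$), not at $t=n-1$.
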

\begin{proof}
	First, let $\lambda_1=\lambda_2=\frac{1}{2}$. By using $h$-mid-convexity of $f$, we get
	\begin{align*}
		f\left(\frac{x_1+x_2}{2}\right)\leq h(\dfrac{1}{2})(f(x_1)+f(x_2))\,.
	\end{align*}
	
	If $ n=4 $ we have
	\begin{align*}
		f\left(\frac{x_1+x_2+x_3+x_4}{4}\right)=&f\left(\frac{1}{2}\Big(\frac{x_1+x_2}{2}\Big)+\frac{1}{2}\Big(\dfrac{x_3+x_4}{2}\Big)\right)\\
		\leq& h(\frac{1}{2})\left(f\Big(\frac{x_1+x_2}{2}\Big)+f\Big(\dfrac{x_3+x_4}{2}\Big)\right)\\
		\leq& h(\dfrac{1}{4})\sum_{i=1}^4f(x_i)\,.
	\end{align*}
	Similarly, for $n=2^k$, where $k\in\mathbb{N}$,
	\begin{align*}
		f\left(\frac{x_1+x_2+\cdots+x_n}{n}\right) \leq h(\frac{1}{n})\big(f(x_1)+\cdots+f(x_n)\big)\,.
	\end{align*}
	New, for $n=3$, by setting $x_4=\frac{x_1+x_2+x_3}{3}$, we have 
	\begin{align*}
		f\left(\frac{x_1+x_2+x_3}{3}\right)=&f\left(\frac{x_1+x_2+x_3+x_4}{4}\right)\\
		=&f\left(\frac{1}{4}\Big(x_1+x_2+x_3+\dfrac{x_1+x_2+x_3}{3}\Big)\right)\\
		\leq & h(\frac{1}{4})\sum_{i=1}^3f(x_i)+h(\frac{1}{4})f\left(\frac{x_1+x_2+x_3}{3}\right)\,.
	\end{align*}
	So
	$$
	\left(1-h(\dfrac{1}{4})\right)f\left(\dfrac{x_1+x_2+x_3}{3}\right)\leq h(\dfrac{1}{4})\sum_{i=1}^3f(x_i)\,.
	$$
	By multiplication two sides of the above inequality to positive number $h(\frac{4}{3})$, we get
	$$
	h(\frac{4}{3})\Big(1-h(\frac{1}{4})\Big)f\left(\frac{x_1+x_2+x_3}{3}\right)\leq h(\frac{1}{4})h(\frac{4}{3})\sum_{i=1}^3f(x_i)\,.
	$$
	Since $h(t)$ is super-multiplicative function, we have
	$$
	h(\frac{4}{3})\Big(1-h(\frac{1}{4})\Big)f\left(\frac{x_1+x_2+x_3}{3}\right)\leq h(\frac{1}{3})\big(f(x_1)+f(x_2)+f(x_3)\big).
	$$
	Now, we show that
	$$
	h(\frac{4}{3})\Big(1-h(\frac{1}{4})\Big)\geq 1\,.
	$$
	Since $h$ is the super-multiplicative function,
	$$
	h(\frac{1}{4})h(\frac{4}{3})h(4)\leq h(\frac{4}{3})\,.
	$$
	We subtract the positive real number $h(\frac{1}{4})h(\frac{4}{3})$ from both sides of the above inequality, 
	$$
	h(\frac{1}{4})h(\frac{4}{3})(h(4)-1)\leq h(\frac{4}{3})-h(\frac{1}{4})h(\frac{4}{3}).
	$$
	So 
	$$
	h(\frac{1}{4})h(\frac{4}{3})(h(4)-1)\leq h(\frac{4}{3}\Big)\Big(1-h(\frac{1}{4})\Big).
	$$
	Since $h(t)\geq t$, we get
	$$
	h(\frac{4}{3})\Big(1-h(\frac{1}{4})\Big)\geq 1\,.
	$$
	Now, let $n\neq 2^k~(k\in\mathbb{N})$, then there exists $m\in\mathbb{N}$ such that $2^{m-1}<n<2^m$. Hence, we have 
	\begin{align*}
		f\left(\frac{\sum_{i=1}^nx_i}{n}\right)=& 		f\left(\frac{\sum_{i=1}^nx_i}{2^m}+\frac{2^m-n}{2^m}\cdot\frac{\sum_{i=1}^nx_i}{n}\right)\\
		\leq& h(\frac{1}{2^m})\sum_{i=1}^nf(x_i)+h(\frac{2^m-n}{2^m})f\left(\frac{\sum_{i=1}^nx_i}{n}\right)\,.
	\end{align*}
	Therefore, 
	$$
	\left(1-h(\frac{2^m-n}{2^m})\right)f\left(\frac{\sum_{i=1}^nx_i}{n}\right)\leq h(\frac{1}{2^m})\sum_{i=1}^nf(x_i)\,.
	$$
	By multiplication of two sides of the above inequality to positive number $h(\frac{2^m}{n})$ and by using the assumption super-multiplicity of $h$, we have 
	$$
	h(\frac{2^m}{n})\left(1-h(\frac{2^m-n}{2^m})\right)f\left(\frac{\sum_{i=1}^nx_i}{n}\right) \leq h(\frac{1}{n})\sum_{i=1}^nf(x_i)\,.
	$$
	Now, we claim that 
	$$
	h(\frac{2^m}{n})\left(1-h(\frac{2^m-n}{2^m})\right)\geq 1\,.
	$$
	Since $h$ is the super-multiplicative function, we get
	$$
	h\left(\frac{2^m}{2^m-n}\right)h\left(\frac{2^m}{n}\right)h\left(\frac{2^m-n}{2^m}\right)\leq h\left(\frac{2^m}{n}\right)\,.
	$$
	We subtract the positive real number $h\left(\frac{2^m}{n}\right)h\left(\frac{2^m-n}{2^m}\right)$ from both sides of the above inequality, 
	$$
	h\left(\frac{2^m}{n}\right)h\left(\frac{2^m-n}{2^m}\right)\left(h\Big(\frac{2^m-n}{2^m}\Big)-1\right)
	\leq h\left(\frac{2^m}{n}\right)\left(1-h\Big(\frac{2^m-n}{2^m}\Big)\right)\,.
	$$
	Since $ h(t)\geq t $ so
	\begin{align*}
		1=&\frac{2^m}{n}\cdot\frac{2^m-n}{2^m}\left(\frac{2^m}{2^m-n}-1\right)\\
		\leq& h\left(\frac{2^m}{n}\right)h\left(\frac{2^m-n}{2^m}\right)\left(h\Big(\frac{2^m}{2^m-n}\Big)-1\right)\\
		\leq& h\left(\frac{2^m}{n}\right)\left(1-h\Big(\frac{2^m-n}{2^m}\Big)\right)\,,
	\end{align*}
	therefore,
	$$
	f\left(\frac{1}{n}\sum_{i=1}^nx_i\right)\le h\big(\frac{1}{n}\big)\sum_{i=1}^nf(x_i)\,.
	$$
	Now, assume that $\lambda_i\in\mathbb{Q}\cap [a, b]~(i=1,\cdots, n)$ such that $\sum_{i=1}^{n}\lambda_i=1$. So, there exist $u_i\in\mathbb{Z}^+$ and $d\in\mathbb{N}$ such that $\lambda_i=\dfrac{u_i}{d}$. Therefore, $\sum_{i=1}^nu_i=d$. So
	\begin{align*}
		f\left(\sum_{i=1}^n\lambda_ix_i\right)=&f\left(\sum_{i=1}^n\frac{u_ix_i}{d}\right)\\
		=& h\big(\frac{1}{d}\big)\sum_{i=1}^nu_if(x_i)\\
		\le& h\big(\frac{1}{d}\big)\sum_{i=1}^nh(u_i)f(x_i)\\
		\le& \sum_{i=1}^nh\big(\frac{u_i}{d}\big)f(x_i)=\sum_{i=1}^nh(\lambda_i)f(x_i)\,.
	\end{align*}
	The desired inequality followed by an appeal to the special case proved first.
\end{proof}
\section{Some famous Theorems}
In this section, we propound some generalizations for Jensen's theorem, Bernstein-Doetsch theorem, Blumberg-Sierpinski theorem, Ostrowski theorem, and Mehdi theorem for $h$-mid-convex function. 
\begin{lemma}[Jensen's Theorem]
	Let $h:\mathbb{R}\to [0,+\infty)$ be a multiplicative continuous function such that $h(t)\geq t$, $h(0)=0$ and $h(1)=1$. If $f$ is a $h$-mid-convex function and $f$ is bounded above to $M$ on $[a, b]$, then $f$ is continuous.
\end{lemma}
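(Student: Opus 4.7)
The plan is to adapt the classical proof of Jensen's continuity theorem---every mid-convex function bounded above on an interval is continuous on its interior---to the present $h$-mid-convex setting, using Theorem~\ref{t2.3} as the main tool. First, I would show that $f$ is also bounded \emph{below}. Setting $c := (a+b)/2$, for each $x \in [a,b]$ the reflection $y := 2c - x$ lies in $[a,b]$, and $h$-mid-convexity gives
\[
f(c) \le h\!\left(\tfrac{1}{2}\right)\bigl(f(x) + f(y)\bigr) \le h\!\left(\tfrac{1}{2}\right)\bigl(f(x) + M\bigr),
\]
so $f(x) \ge f(c)/h(\tfrac{1}{2}) - M$, providing a uniform lower bound.

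Next, I would prove upper semi-continuity at an interior point $x_0 \in (a,b)$. Fix $\delta > 0$ with $[x_0 - \delta,\, x_0 + \delta] \subset [a,b]$. For each $x$ with $0 < |x - x_0| < \delta$, choose a rational $\lambda_x \in (|x - x_0|/\delta,\, 1)$ and set $z_x := x_0 + (x - x_0)/\lambda_x$; then $z_x \in [x_0 - \delta,\, x_0 + \delta]$ and $x = \lambda_x z_x + (1 - \lambda_x) x_0$. Multiplicativity of $h$ entails super-multiplicativity, so Theorem~\ref{t2.3} applied to two rational weights yields
\[
f(x) \le h(\lambda_x)\, f(z_x) + h(1 - \lambda_x)\, f(x_0) \le h(\lambda_x)\, M + h(1 - \lambda_x)\, f(x_0).
\]
Choosing $\lambda_x \to 0$ rationally as $x \to x_0$ and invoking continuity of $h$ with $h(0) = 0$ and $h(1) = 1$, I conclude that $\limsup_{x \to x_0} f(x) \le f(x_0)$.

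For lower semi-continuity, writing $x_0 = \tfrac{1}{2} x + \tfrac{1}{2}(2x_0 - x)$ and applying $h$-mid-convexity gives
\[
f(x) \ge \frac{f(x_0)}{h(1/2)} - f(2x_0 - x).
\]
As $x \to x_0$ the reflection $2x_0 - x$ also tends to $x_0$, so by the upper semi-continuity just proved, $\limsup f(2x_0 - x) \le f(x_0)$. Hence $\liminf_{x \to x_0} f(x) \ge f(x_0)/h(\tfrac{1}{2}) - f(x_0)$. The decisive observation, which I expect to be the main obstacle, is that the combined hypotheses ($h$ multiplicative, $h(1) = 1$, $h(t) \ge t$) force $h(\tfrac{1}{2}) = \tfrac{1}{2}$: from $1 = h(1) = h(\tfrac{1}{2}) h(2)$ together with $h(\tfrac{1}{2}) \ge \tfrac{1}{2}$ and $h(2) \ge 2$, equality must hold throughout. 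Hence $1/h(\tfrac{1}{2}) = 2$ and $\liminf f(x) \ge 2 f(x_0) - f(x_0) = f(x_0)$. Combined with upper semi-continuity this yields continuity of $f$ on $(a,b)$; endpoint continuity, if sought, requires the obvious one-sided analogue of the reflection argument, and without the identity $h(\tfrac{1}{2}) = \tfrac{1}{2}$---for instance, under only super-multiplicativity of $h$---the lower semi-continuity step would stall.
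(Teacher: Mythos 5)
Your argument is correct, but it follows a genuinely different route from the paper. The paper applies Theorem~\ref{t2.3} with $m$ copies of $x+n\delta$ and $n-m$ copies of $x$ to get a chord inequality for $f(x+m\delta)$, repeats it with $-\delta$, uses $h$-mid-convexity to compare the forward and backward increments, then sets $m=1$, bounds everything by $M$, and lets $\delta\to 0$ with $n\to\infty$ to squeeze $|f(x+\delta)-f(x)|$ to zero --- an increment-sandwich argument in the classical Jensen style. You instead split continuity into semicontinuities: upper semicontinuity at an interior point $x_0$ by writing $x=\lambda_x z_x+(1-\lambda_x)x_0$ with a small rational weight on a point $z_x$ controlled only by the upper bound $M$ (using Theorem~\ref{t2.3} with two rational weights, plus $h(0)=0$, $h(1)=1$ and continuity of $h$), and lower semicontinuity by reflecting through $x_0$ and feeding the upper semicontinuity back in, after observing that multiplicativity, $h(1)=1$ and $h(t)\ge t$ force $h(\tfrac12)=\tfrac12$. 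That observation is sound (indeed these hypotheses force $h(t)=t$ for all $t>0$, so the lemma secretly reduces to the classical case), and your route is arguably cleaner and makes the role of each hypothesis explicit, whereas the paper's limiting argument avoids isolating $h(\tfrac12)$ but is more computational. Two minor points: your preliminary lower bound for $f$ is never used in the rest of your argument; and your closing caveat is off, since super-multiplicativity together with $h(1)=1$ and $h(t)\ge t$ already gives $1=h(1)\ge h(\tfrac12)h(2)\ge 1$ and hence $h(\tfrac12)=\tfrac12$, so the lower-semicontinuity step would not in fact stall there. Also note that, like the paper's own proof, your argument yields continuity only on the open interval $(a,b)$; continuity at the endpoints is not to be expected, so the hedge you make there is appropriate.
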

\begin{proof}
	Since the function $f$ is $h$-mid-convex function, so for every $x_i\in (a,b)$, $1\leq i\leq n~(n\in\mathbb{N})$, by Theorem \ref{t2.3},
	$$
	f\left(\frac{x_1+x_2+\cdots+x_n}{n}\right)\leq h(\frac{1}{n})\big(f(x_1)+f(x_2)+\cdots+f(x_n)\big)\,.
	$$
	Choose a positive integer $m$ such that $m<n$ and set 
	$$
	x_1=x_2=x_3=\cdots=x_m:=x+m\delta\,,
	$$
	and
	$$
	x_{m+1}=x_{m+2}=\cdots=x_n:=x\,,
	$$
	where $x\in (a,b)$ and the positive real number $\delta$ is chosen such that $x+n\delta\in (a,b)$. Therefore
	\begin{align*}
		f(x+m\delta)=&f\left(\frac{m(x+n\delta)+(n-m)x}{n}\right)\\
		\leq& h(\frac{m}{n})f(x+n\delta)+h(\frac{n-m}{n})f(x)
	\end{align*}
	By multiplication of two sides of the above inequality to positive number $h(\frac{1}{m})$ and by using the assumption super-multiplicity of $h$, we have
	\begin{align*}
		h\left(\frac{1}{m}\right)&\big(f(x+m\delta)-f(x)+f(x)\big)\\
		&\leq h\left(\frac{1}{n}\right)\big(f(x+n\delta)-f(x)+f(x)\big)+h\left(\frac{n-m}{nm}\right)f(x)\,.
	\end{align*}
	Now, we show that
	\begin{align}\label{eq2.7}
		h\left(\frac{1}{m}\right)&\big(f(x+m\delta)-f(x)\big)\\
		&\leq h\left(\frac{1}{n}\right)\big(f(x+n\delta)-f(x)\big)+f(x)\bigg(h\Big(\frac{1}{n}\Big)
		+h\Big(\frac{n-m}{nm}\Big)-h\Big(\dfrac{1}{m}\Big)\bigg)\nonumber\,.
	\end{align}
	Now, by replacing $\delta$ with $-\delta$ be such that $x-n\delta\in(a, b)$, we have
	\begin{align}\label{eq2.8}
		h\left(\frac{1}{m}\right)&\big(f(x)-f(x-m\delta)\big)\\
		&\geq h\left(\frac{1}{n}\right)\big(f(x)-f(x-n\delta)\big)+f(x)\bigg(h\Big(\frac{1}{m}\Big)
		-h\Big(\frac{1}{n}\Big)-h\Big(\frac{n-m}{nm}\Big)\bigg)\nonumber\,.
	\end{align}
	Due to $h$-mid-convexity of $f$, for every $x\in (a,b)$, we have
	$$
	f(x)=f\left(\frac{x+m\delta+x-m\delta}{2}\right)\leq h(\dfrac{1}{2})\big(f(x+m\delta)+f(x-m\delta)\big)\,.
	$$
	Since $h(t)\geq t$ and $h$ is super-multiplicative, 
	$$
	2f(x)\leq h(2)f(x)\leq f(x+m\delta)+f(x-m\delta)\,.
	$$
	So
	$$
	f(x+m\delta)-f(x)\geq f(x)-f(x-m\delta)\,,
	$$
	By multiplication two sides of the above inequality to positive number $h(\frac{1}{m})$, we have
	\begin{equation}\label{eq2.9}
		h(\dfrac{1}{m})\bigg(f(x+m\delta)-f(x)\bigg)\geq h(\dfrac{1}{m})\bigg(f(x)-f(x-m\delta)\bigg)
	\end{equation}
	Applying \eqref{eq2.7}, \eqref{eq2.8} and \eqref{eq2.9}, we have
	\begin{align*}
		&h(\frac{1}{n})\big(f(x+n\delta)-f(x)\big)+f(x)\left(h\Big(\frac{1}{n}\Big)+h\Big(\frac{n-m}{nm}\Big)-h\Big(\frac{1}{n}\Big)\right)\\
		&\geq h(\frac{1}{m})\big(f(x+m\delta)-f(x)\big)\\
		& \geq  h(\frac{1}{m})\big(f(x)-f(x-m\delta)\big)\\
		& \geq  h(\frac{1}{n})\big(f(x)-f(x-n\delta)\big)+f(x)\left(h\Big(\frac{1}{m}\Big)-h\Big(\frac{1}{n}\Big)-h\Big(\frac{n-m}{nm}\Big)\right)\,.
	\end{align*}
	Since $f$ is bounded above to $M$ on $[a,b]$ and by putting $m=1$, we get
	\begin{align*}
		&h(\frac{1}{n})\big(M-f(x)\big)+f(x)\left(h\Big(\frac{1}{n}\Big)+h\Big(\frac{n-1}{n}\Big)-1\right) \geq f(x+\delta)-f(x)\\
		& \geq f(x)-f(x-\delta)\\
		& \geq h(\frac{1}{n})(f(x)-M)+f(x)\bigg(1-h\Big(\frac{1}{n}\Big)-h\Big(\frac{n-1}{n}\Big)\bigg)\,.
	\end{align*}
	Now, if $\delta\to 0$, $n\in\mathbb{N}$ can be chosen so large that $x\pm n\delta\in (a,b)$, therefore, it can be assumed that $n\to \infty$. So, 
	$$
	\lim_{|\delta|\to 0}|f(x+\delta)-f(x)|=0\,.
	$$
	Hence, the function $f$ is continuous.
\end{proof}
\begin{theorem}[Bernstein-Doetsch Theorem]
	Let $h:\mathbb{R}\to[0,+\infty)$ be a super-multiplicative continuous function and, bounded to $M'$ such that $h(t)\geq t$ and $h(0)=0$ and $f$ be a $h$-mid-convex function on $(a, b)$. If $f$ is bounded above to $M$ in a neighborhood of the point $x_0\in (a, b)$, then $f$ is continuous.
\end{theorem}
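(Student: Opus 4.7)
The plan is to mirror the classical Bernstein--Doetsch strategy: first propagate the local upper bound on $f$ from $x_0$ to every point of $(a,b)$, then localize the argument from the preceding Jensen-type lemma to obtain continuity at each such point.

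For the propagation step, fix $y\in(a,b)$. Since $(y-\lambda x_0)/(1-\lambda)\to y\in(a,b)$ as $\lambda\to 0^{+}$, I may choose a small positive rational $\lambda$ for which $z:=(y-\lambda x_0)/(1-\lambda)\in(a,b)$, so that $y=\lambda x_0+(1-\lambda)z$. Let $r>0$ satisfy $f\leq M$ on $(x_0-r,x_0+r)$. Every point of $(y-\lambda r,y+\lambda r)$ is of the form $\lambda x+(1-\lambda)z$ for some $x\in(x_0-r,x_0+r)$, so Theorem~\ref{t2.3} together with $h\leq M'$ yields
$$
f\bigl(\lambda x+(1-\lambda)z\bigr)\leq h(\lambda)f(x)+h(1-\lambda)f(z)\leq M'\bigl(M+f(z)\bigr),
$$
a uniform upper bound for $f$ on that neighborhood of $y$.

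For the continuity step, fix $y\in(a,b)$ and pick $r_y>0$ and $M_y$ with $f\leq M_y$ on $(y-r_y,y+r_y)\subset(a,b)$. The chain of inequalities used in the proof of the Jensen lemma applies verbatim with $x_0$ replaced by $y$ and $M$ replaced by $M_y$: for every $n\in\mathbb{N}$ and every $\delta$ satisfying $n|\delta|<r_y$, Theorem~\ref{t2.3} and $h$-mid-convexity yield two-sided bounds for $f(y+\delta)-f(y)$ whose extremes take the form
$$
\pm\Bigl[h\!\left(\tfrac{1}{n}\right)\bigl(M_y-f(y)\bigr)+f(y)\bigl(h\!\left(\tfrac{1}{n}\right)+h\!\left(\tfrac{n-1}{n}\right)-1\bigr)\Bigr].
$$
The standing hypotheses force $h(1)=1$: super-multiplicativity gives $h(1)\geq h(1)^{2}$, hence $h(1)\leq 1$, while $h(t)\geq t$ gives $h(1)\geq 1$. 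Together with continuity of $h$ and $h(0)=0$, sending $n\to\infty$ forces $h(1/n)\to 0$ and $h((n-1)/n)\to 1$, so both extremes tend to $0$; choosing $n$ first (to make the bound small) and then $\delta$ with $|\delta|<r_y/n$ delivers $|f(y+\delta)-f(y)|\to 0$.

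The main obstacle is the propagation step, where the assumption $h\leq M'$ is essential: it produces the estimate $M'(M+f(z))$ that is uniform in $x$, whereas without such a bound one could only hope for control in terms of $h(\lambda)$ and $h(1-\lambda)$, quantities that must remain manageable even as $\lambda$ is forced to be small to keep $z\in(a,b)$. Once local boundedness is established at every point of $(a,b)$, the continuity step is a faithful rerun of the proof of Jensen's theorem, the only new point being the automatic identity $h(1)=1$ needed when sending $n\to\infty$ in the term $h((n-1)/n)$.
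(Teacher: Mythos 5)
Your proposal is correct and follows essentially the same route as the paper: propagate the local upper bound from $x_0$ to an arbitrary point $y$ by writing a neighborhood of $y$ as rational convex combinations of points near $x_0$ with a point $z\in(a,b)$ and using the bound $h\le M'$, then conclude continuity by the Jensen-lemma estimate with $m=1$ and $n\to\infty$. The only (welcome) refinements are that you run the Jensen-lemma argument locally at every point instead of treating $x_0$ separately, and you explicitly derive $h(1)=1$ from super-multiplicativity and $h(t)\ge t$, which the paper's lemma assumes outright.
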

\begin{proof}
	Without lose of generality, $x_0=0\in (a,b)$ and $f$ is bounded above to $M$ on $(-r, r)\subseteq (a, b)$, $f(0)=0$. Let $t\in(0, 1)$ be a rational number. Then choose a real number $x$ such that $|x|\leq tr$. We have 
	$$
	x=(1-t)0+t(\frac{x}{t})\,,
	$$
	and
	$$
	0=\frac{t}{1+t}(-\frac{x}{t})+\dfrac{x}{1+t}\,.
	$$
	Since $f$ is $h$-mid-convex function and $f(0)=0$, we get
	\begin{equation}\label{eq33.4}
		f(x)=f\left((1-t)0+t\Big(\frac{x}{t}\Big)\right) \leq h(t)f\Big(\frac{x}{t}\Big)\leq h(t)M\,,
	\end{equation}
	and
	\begin{align*}
		0=&f(0)=f\left(\frac{-tx}{t(1+t)}+\frac{x}{1+t}\right)\\
		\leq & h\Big(\frac{t}{1+t}\Big)f\left(\frac{-x}{t}\right)+h\Big(\frac{1}{1+t}\Big)f(x)\\
		\leq & h\Big(\frac{t}{1+t}\Big)M+h\Big(\frac{1}{1+t}\Big)f(x)\,.
	\end{align*}
	So
	\begin{equation}\label{eq33.5}
		-h\Big(\frac{t}{1+t}\Big)M\geq  h\Big(\frac{1}{1+t}\Big)f(x)\,.
	\end{equation}
	Therefore, $f(x)\geq -h(t)M$, and by using \eqref{eq33.4} and \eqref{eq33.5}, we have $|f(x)|\leq h(t)M$. This shows that $f$ is continuous at $x=0$. Since $f$ is bounded from above on $N_r(0)$, we will show that $f$ is bounded above in a neighborhood of any other point $y\neq 0\in(a,b)$.  Choose a rational number $s>1$ such that $z=sy\in(a,b)$ and set $\lambda:=\dfrac{1}{s}$. Then 
	$$
	M=\big\{v\in\mathbb{R}~:~ v=(1-\lambda)x+\lambda z;~ x\in (-r,r)\big\}\,,
	$$
	is a neighborhood of $ y=\lambda z $ with radius $ (1-\lambda)r $. Moreover,
	\begin{align*}
		f(v)&\leq h(1-\lambda)f(x)+h(\lambda)f(z)\\
		&\leq M'f(x)+M'f(z)\\
		&\leq M'M+M'f(z)\,,
	\end{align*}
	that is, $f$ is bounded above on $U$ and by the Jensen's theorem, $f$ is continuous at $y$.
\end{proof}
\begin{theorem}[Blumberg-Sierpinski Theorem]
	If $h:\mathbb{R}\to [0,+\infty)$ is a continuous and super-multiplicative function such that $h(t)\geq t$ and $f:(a, b)\to\mathbb{R}$ is measurable and $h$-mid-convex function, then $f$ is continuous on (a,b).
\end{theorem}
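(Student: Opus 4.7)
The plan is to reduce the theorem to the Bernstein-Doetsch theorem proved just above, which guarantees continuity of an $h$-mid-convex function once we know it is bounded above on some neighborhood. It therefore suffices to show that a measurable $h$-mid-convex function is locally bounded above somewhere in $(a,b)$; for this I intend to adapt the classical Blumberg-Sierpinski argument, whose main ingredient is the Steinhaus theorem on sumsets of measurable sets.

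First, I would write $(a,b)=\bigcup_{n\in\mathbb{N}}E_n$, where $E_n=\{x\in(a,b):f(x)\le n\}$. Measurability of $f$ makes every $E_n$ Lebesgue measurable, and countable additivity of Lebesgue measure forces some $E_N$ to have positive measure; fix such an $N$ and set $E:=E_N$. Next, Steinhaus's theorem applied to the two positive-measure sets $E$ and $E$ shows that $E+E$ has nonempty interior, so contains an open interval $(c,d)$. Consequently $\tfrac{1}{2}(E+E)\supseteq(c/2,d/2)$, an open sub-interval of $(a,b)$ (convexity of $(a,b)$ guarantees the containment). For every $z$ in this sub-interval we can pick $x,y\in E$ with $z=\tfrac{x+y}{2}$, so $h$-mid-convexity of $f$ yields
$$f(z)\le h\!\left(\tfrac{1}{2}\right)\bigl(f(x)+f(y)\bigr)\le 2N\,h\!\left(\tfrac{1}{2}\right).$$
Thus $f$ is bounded above by the constant $2Nh(1/2)$ on an open sub-interval of $(a,b)$, and in particular in a neighborhood of any point of that sub-interval. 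The Bernstein-Doetsch theorem then gives continuity of $f$ on all of $(a,b)$.

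The main technical step is the invocation of Steinhaus's theorem (equivalently, the lemma that the sumset of two sets of positive Lebesgue measure has nonempty interior); everything else is a direct transcription of the classical mid-convex argument with the constant $\tfrac{1}{2}$ replaced by $h(\tfrac{1}{2})$. One small point to verify is that the Bernstein-Doetsch theorem as stated requires $h$ to be bounded by a constant $M'$, whereas here $h$ is assumed only continuous; however, continuity of $h$ on the compact interval $[0,1]$ already furnishes such a bound in the range of arguments that actually occurs in the earlier proof, so the application goes through.
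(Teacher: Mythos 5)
Your proposal is correct, but it takes a genuinely different route from the paper. You use sublevel sets $E_n=\{x: f(x)\le n\}$, extract one of positive measure, and invoke Steinhaus's theorem (the sumset of positive-measure sets contains an interval) to conclude that $f\le 2Nh(\tfrac12)$ on an open subinterval of $(a,b)$; local upper boundedness plus the Bernstein--Doetsch theorem then finishes the argument. The paper instead argues by contradiction: if $f$ were discontinuous at $x_0$, Bernstein--Doetsch forces $f$ to be unbounded above near $x_0$, so each superlevel set $B_n=\{x: f(x)>n/h(\tfrac12)\}$ meets $(x_0-c,x_0+c)$; $h$-mid-convexity then shows that for every $\lambda\in[0,1]$ either $u+\lambda c$ or $u-\lambda c$ lies in $B_n$, which gives the measure estimate $m(B_n)\ge c$ by translation invariance, and continuity of measure on the nested sequence $B_1\supseteq B_2\supseteq\cdots$ produces a point where $f$ exceeds every bound --- a contradiction. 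So the paper's proof is self-contained in elementary measure theory (no Steinhaus), at the price of a contradiction setup and the superlevel-set estimate, whereas yours is the shorter classical Sierpinski-style argument but imports Steinhaus as an external lemma. Your observation about the hypothesis mismatch with Bernstein--Doetsch (boundedness of $h$ recovered from continuity on $[0,1]$) is apt; note only that the paper's statement of Bernstein--Doetsch also assumes $h(0)=0$, which neither you nor the paper (which likewise cites Bernstein--Doetsch inside its own proof under the present hypotheses) verifies here --- so this is a defect inherited from the paper rather than a gap specific to your argument.
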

\begin{proof}
	We use proof by contradiction. Let $f$ be a discontinuous in a point $x_0\in (a,b)$ and $c>0$ such that $(x_0-2c,x_0+2c)\subseteq (a,b)$. Set
	$$
	B_n:=\left\{x\in (a,b)~:~ f(x)>\frac{n}{h(\frac{1}{2})}\right\},\qquad (n\in\mathbb{N})\,.
	$$
	It is obvious that $B_n$ is a measurable set. For fixed $n$, with the Bernstien-Doetsch theorem, $B_n\neq\varnothing$. So, take $u\in B_n\cap (x_0-c,x_0+c)$. Now, for any $\lambda\in [0,1]$
	$$
	\frac{n}{h(\frac{1}{2})}<f(u)=f\left(\frac{u+\lambda c}{2}+\frac{u-\lambda c}{2}\right)\leq h(\frac{1}{2})\Big(f(u+\lambda c)+f(u-\lambda c)\Big)\,.
	$$
	It follows that either $\frac{n}{h(\frac{1}{2})}\leq h(\frac{1}{2})\big(2f(u+\lambda c)\big)$ or $\frac{n}{h(\frac{1}{2})}\leq h(\frac{1}{2})\big(2f(u-\lambda c)\big)$. Since $h(t)\geq t$ and it is super-multiplicative, we have $\frac{n}{h(\frac{1}{2})}\leq f(u+\lambda c)$ or $\frac{n}{h(\frac{1}{2})}\leq f(u-\lambda c)$. Thus, either $u+\lambda c$ or $u-\lambda c$ belongs to $B_n $, or equivalently, if $M_n:=\{x~:~ x=y-u,~ y\in B_n\}$, then either $\lambda c$ or $-\lambda c$ belongs to $M_n$ for any $\lambda\in [0,1]$. We claim that $c\leq m(M_n)=m(B_n)$. Because, set $A_1:=M_n\cap [-c,0]$, $A_2:=M_n\cap [0,c]$. Then $-A_1\cap A_2=[0,c]$. So
	$$
	c=m[0,c]\leq m(-A_1)+m(A_2)=m(A_1)+m(A_2)=m(A_1\cup A_2)\leq m(M_n)\,.
	$$
	On the other hands, $B_1\supseteq B_2\supseteq B_3\supseteq \cdots$, therefore
	$$
	c\leq \lim_{n\to +\infty}m(B_n)=m\left(\bigcap_{i=1}^{+\infty}B_i\right)\,,
	$$
	and so $\cap_{i=1}^{+\infty}B_i\neq \emptyset$. This means that there is a point $ t\in (a,b) $ such that $f(t)>\frac{n}{h(\frac{1}{2})}$ for every $n\in\mathbb{N}$, and this impossible.
	
	To complete the proof, we consider a measurable set $ M $ for which $ -\lambda c $ or $ \lambda c $ is in $ M $ for each $ \lambda\in [0,1] $ 
\end{proof}
\begin{theorem}[Ostrowski Theorem]
	If $h:\mathbb{R}\to [0,+\infty)$ is a super-multiplicative function such that $h(t)\geq t$, $h(1)=1$ and $f:(a,b)\to \mathbb{R}$ is $h$-mid-convex function and $M\subseteq (a,b)$ has positive Lebesgue measure in $(a,b)$ such that $f$ is bounded above on $M$, then $f$ is continuous.
\end{theorem}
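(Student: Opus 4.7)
The plan is to reduce the statement to the Bernstein-Doetsch theorem proved above by producing an open subinterval $J \subseteq (a,b)$ on which $f$ is bounded above. Since such a $J$ is a neighborhood of each of its points inside $(a,b)$, the Bernstein-Doetsch theorem then delivers continuity of $f$ on the whole of $(a,b)$.

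First I would invoke the classical Steinhaus theorem from measure theory. Since $\tfrac{1}{2}M$ has positive Lebesgue measure, Steinhaus gives that $\tfrac{1}{2}M+\tfrac{1}{2}M=\tfrac{1}{2}(M+M)$ contains an open interval; intersecting with $(a,b)$, we may choose an open interval $J \subseteq (a,b)\cap \tfrac{1}{2}(M+M)$. In particular, every $z \in J$ admits a representation $z=(x+y)/2$ with $x,y \in M$.

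Next, let $K$ be an upper bound for $f$ on $M$, and recall that $h(\tfrac{1}{2})$ is a fixed non-negative number. For any $z \in J$, pick $x,y \in M$ with $z=(x+y)/2$. The $h$-mid-convexity of $f$ together with $f(x), f(y)\le K$ yields
\[
f(z)=f\!\left(\frac{x+y}{2}\right)\le h\!\left(\tfrac{1}{2}\right)\big(f(x)+f(y)\big)\le 2h\!\left(\tfrac{1}{2}\right)K,
\]
so $f$ is bounded above on $J$. Choosing any $x_0 \in J$, the open interval $J$ is a neighborhood of $x_0$ in $(a,b)$ on which $f$ is bounded above; hence the Bernstein-Doetsch theorem forces $f$ to be continuous on $(a,b)$.

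The main obstacle is the Steinhaus step, which is the only measure-theoretic ingredient; everything after it is a one-line calculation using $h$-mid-convexity. If one prefers a self-contained argument, one can approximate $M$ from within by a compact subset $C \subseteq M$ of positive measure, pick a Lebesgue density point $m_0$ of $C$, and show that for all sufficiently small $\varepsilon$ the sets $C$ and $2(m_0+\varepsilon)-C$ both have density close to $1$ near $m_0$, and therefore meet; this exhibits $m_0+\varepsilon$ as the midpoint of two points of $M$, which produces the required interval $J$ and completes the reduction to Bernstein-Doetsch.
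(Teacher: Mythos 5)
Your argument is correct, but it takes a genuinely different route from the paper. You obtain an open interval of midpoints directly from the Steinhaus theorem applied to $\tfrac12 M+\tfrac12 M$, bound $f$ there by $2h\bigl(\tfrac12\bigr)K$ in one line, and then hand everything to the Bernstein--Doetsch theorem. The paper instead gives the classical self-contained Ostrowski argument: it covers $M$ by open intervals of total measure less than $\tfrac43 m(M)$, extracts one interval $T$ with $m(T)<\tfrac43\,m(M\cap T)$, assumes $f$ is unbounded above near the midpoint of $T$, picks a nearby point $x_1$ with $f(x_1)>1$, and reflects $M\cap T$ through $x_1$; the $h$-mid-convexity forces $f\ge 2$ on the reflected set $N$, so $N$ misses $M$, and the measure count $m(T)\ge m(M\cap T)+m(N\cap T)\ge\tfrac43 m(M\cap T)$ contradicts the choice of $T$. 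So the paper proves local boundedness above by contradiction without ever citing Steinhaus, while you import Steinhaus (or, in your fallback, reprove a Steinhaus-type statement via density points and a reflection, which is quite close in spirit to the paper's reflection trick) and get a direct, shorter proof in which the convexity hypothesis is used exactly once. Both proofs end the same way, by reducing to Bernstein--Doetsch, and both therefore share the same caveat: the Bernstein--Doetsch theorem as stated in the paper assumes in addition that $h$ is continuous, bounded and $h(0)=0$, hypotheses not listed in the Ostrowski statement; this mismatch is inherited from the paper and is not a defect of your argument specifically.
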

\begin{proof}
	Without loss of generality, we assume that $f\leq 0$ on $M$. Since $m(M)>0$, we can cover $M$ by disjoint open intervals as $T_n$ such that 
	$$
	m(M)\leq \sum_{n=1}^{+\infty}m(T_n)<\frac{4}{3}m(M)\,.
	$$
	For every $n\in\mathbb{N}$, the sets $M\cap T_n$ are measurable and disjoint, therefore
	$$
	\sum_{n=1}^{+\infty}m(M\cap T_n)= m\left[\bigcup_{n=1}^{+\infty}m(M\cap T_n)\right]=m(M)\,,
	$$
	hence, there exists at last a $T_n$, we denote by $T$, such that 
	\begin{equation}\label{eII}
		0<m(M\cap T)\leq m(T)<\dfrac{4}{3}m(M\cap T)\,.
	\end{equation}
	Assume that $x_0$ is the mid-point of $T=(c,d)$ and $f$ is not bounded from above in any neighborhood of $x_0$. Choose $x_1$ such that 
	$$
	|x_1-x_0|<\dfrac{m(M\cap T)}{6}=\varepsilon,\qquad f(x_1)>1.
	$$
	Set $N:=\{y~:~ y=2x_1-x;~ x\in M\cap T\}$. It is obvious that $N$ is a reflection and $M\cap T$ is transferred. So
	$$
	m(N)=m(M\cap T)=6\varepsilon.
	$$
	Now, for every $y\in N$
	\begin{align*}
		|y-x_0|&=|2x_1-x-x_0|=|2(x_1-x_0)+x_0-x|\\
		&\leq 2|x_1-x_0|+|x_0-x|\\
		&<2\varepsilon +|x_0-x|\\
		&=\dfrac{m(M\cap T)}{3}+|x_0-x|.
	\end{align*}
	Hence, $N\subset (c-2\varepsilon,d+2\varepsilon)$. Since $m(N)=m(M\cap T)=6\varepsilon$, we have $m(N\cap T)\geq 2\varepsilon$. By $h$-mid-convexity of $f$, 
	$$
	f(x_1)=f\left(\frac{x+y}{2}\right)\leq h(\frac{1}{2})\big(f(x)+f(y)\big)\leq h(\frac{1}{2})f(y)\,.
	$$
	By multiplication two sides of the above inequality to positive number $h(2)$ and using super-multiplicity of $h$, we have
	$$
	h(2)f(x_1)\leq h(2)h(\frac{1}{2})f(y)\leq h(1)f(y)=f(y\,).
	$$
	Since $h(t)\geq t$
	\begin{align*}
		2\leq 2f(x_1)\leq h(2)f(x_1)\leq h(1)f(y)=f(y)\,.
	\end{align*}
	So, $f(y)\geq 2$ and $N\cap M=\varnothing$. Therefore
	\begin{align*}
		m(T)&\geq m\bigg[(M\cap T)\cup (N\cap T)\bigg]\\
		&=m(M\cap T)+m(N\cap T)\\
		&\geq 6\varepsilon +2\varepsilon=8\varepsilon\\
		&=\dfrac{8}{6}m(M\cap T)=\dfrac{4}{3}m(M\cap T).
	\end{align*}
	This is a contradiction by \eqref{eII}. So $f$ is bounded above on a neighborhood of $x_0$. So, $f$ is continuous.
\end{proof}
\begin{corollary}[Mehdi Theorem]
	Let $f$ be a $h$-mid-convex function defined on $I$ such that bounded above on a second category Baire set. Then $f$ is continuous (hence $h$-convex) on $I^\circ$, where $I^\circ$ is interior of $I$.
\end{corollary}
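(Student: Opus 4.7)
The plan is to reduce to the Bernstein--Doetsch theorem proved just above by producing an open interval on which $f$ is bounded from above, and then to invoke Theorem~2.1 to promote the resulting continuity to $h$-convexity on $I^{\circ}$. The bridge from ``second category Baire set'' to ``open interval'' will be supplied by the classical Piccard--Pettis theorem, which plays here the role that the Steinhaus theorem on difference sets played in the Ostrowski-type result immediately preceding.

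Let $B\subseteq I$ be the Baire set of second category on which $f\le M$. First I would invoke Piccard--Pettis: if $B\subseteq\mathbb{R}$ has the Baire property and is not meager, then the algebraic sum $B+B=\{b_1+b_2:b_1,b_2\in B\}$ contains a nonempty open interval. Dividing by $2$, the midpoint set
$$
V\;=\;\left\{\frac{b_1+b_2}{2}\,:\,b_1,b_2\in B\right\}
$$
contains an open subinterval $V_0\subseteq I^{\circ}$. Next, for each $y\in V_0$ I would fix a representation $y=(b_1+b_2)/2$ with $b_1,b_2\in B$ and apply $h$-mid-convexity to obtain
$$
f(y)\;=\;f\!\left(\frac{b_1+b_2}{2}\right)\;\le\;h\!\left(\frac{1}{2}\right)\bigl(f(b_1)+f(b_2)\bigr)\;\le\;2\,h\!\left(\frac{1}{2}\right)M,
$$
so $f$ is bounded above on the open interval $V_0$ by the constant $2h(1/2)M$.

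With this bound at hand, the Bernstein--Doetsch theorem applies at any $x_0\in V_0$ and yields continuity of $f$ on $I^{\circ}$; Theorem~2.1 then promotes this continuity to full $h$-convexity, delivering both conclusions of the corollary in one stroke. The main obstacle is the appeal to Piccard--Pettis, which is a nontrivial descriptive-set-theoretic result and must be cited rather than reproved in place; a secondary point is to verify that the ambient hypotheses on $h$ inherited from the preceding Bernstein--Doetsch and Ostrowski statements (super-multiplicativity, continuity, and $h(t)\ge t$) are those intended here, since the corollary's statement does not spell them out explicitly.
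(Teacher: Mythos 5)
The paper states this corollary without any proof at all, so there is no in-text argument to compare against; your proposal in effect supplies the missing one, and it is sound. The route you take is the standard one for Mehdi's theorem: Piccard--Pettis gives that $B+B$ has nonempty interior for a nonmeager set $B$ with the Baire property, the midpoint set $\tfrac{1}{2}(B+B)$ is then an open subinterval of $I^\circ$ (it stays in $I$ by convexity of $I$), $h$-mid-convexity transfers the bound $f\le M$ on $B$ to the bound $2h(\tfrac12)M$ on that interval since $h(\tfrac12)\ge\tfrac12>0$, and the paper's Bernstein--Doetsch theorem plus Theorem 2.1 finish the job. This matches the paper's evident intent, which is that the corollary should follow from the preceding continuity theorems; note that it cannot be deduced from the Ostrowski theorem alone, since a second category Baire set need not have positive measure, so some category-theoretic bridge such as Piccard--Pettis (or, alternatively, a direct argument imitating the paper's Ostrowski proof with ``meager'' replacing ``measure zero,'' which is Mehdi's original tactic and would avoid the external citation) is genuinely needed. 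Your closing caveat is also well placed: the corollary as stated omits the hypotheses on $h$ (non-negativity, super-multiplicativity, continuity, $h(t)\ge t$, and in the Bernstein--Doetsch statement also boundedness and $h(0)=0$) that the invoked theorems require, so these must be understood as inherited; that is a defect of the statement in the paper rather than of your argument.
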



\begin{thebibliography}{50}
	\bibitem{AMM} M. Abbasi, A. Morassaei and F. Mirzapour, \textit{Jensen–Mercer type inequalities for operator $h$-convex functions}, B. Iran Math. Soc. \textbf{48} (2022), 2441--2462.
	
	\bibitem{Bre}  W.W. Breckner, \textit{Stetigkeitsaussagen f\"ur eine Klasse verallgemeinerter konvexer funktionen in topologischen linearen R\"aumen}, Publ. Inst. Math. \textbf{23} (1978), 13--20.
	
	\bibitem{CM} A. Chademan and F. Mirzapour, \textit{Mid-convex functions in locally compact groups}, Proc. Amer. Math. Soc. \textbf{127(10)} (1999), 2961--2968.
	
	\bibitem{DNT} V. Darvish, S.S. Dragomir, H.M. Nazari and A. Taghavi, \textit{Some inequalities associated with the Hermite-Hadamard inequalities for operator $h$-convex functions}, Acta Math. Appl. Sin-E, \textbf{21C(2)} (2017), 1840-1848.
	
	\bibitem{D1} S.S. Dragomir, \textit{Inequalities of Hermite-Hadamard type for $h$-convex functions on linear spaces}, Acta. Math. Univ. Comenion. (N.S), \textbf{34(4)} (2015), 323-341.
	
	\bibitem{DPP} S.S. Dragomir, J. Pe\v{c}ari\'{c} and L.E. Persson, \textit{Some inequalities of Hadamard type}, Soochow J. Math. \textbf{21} (1995), 335--341.
	
	\bibitem{GL} E.K. Godunova and V.I. Levin, \textit{Neravenstva dlja funkcii \v{s}irokogo klassa, soder\v{z}a\v{s}\v{c}ego vypuklye, monotonnye i nekotorye drugie vidy funkcii}, in: Vy\v{c}islitel. Mat. i. Mat. Fiz. Me\v{z}vuzov. Sb. Nau\v{c}. Trudov, MGPI, Moskva, (1985), 138--142.
	
	\bibitem{MM} F. Mirzapour and A. Morassaei, \textit{Inequalities for $h$-log-convex functions}, Rocky Mountain J. Math. \textbf{52(3)} (2022), 1009--1020.
	
	\bibitem{RV} A.W. Roberts and D.E. Varberg, \textit{Convex functions}, Academic Press, New York, London, 1973.
	
	\bibitem{SSO} M.Z. Sarikaya, A.E. Set and M.E. \"{O}zdemir, \textit{On some new inequalities of Hadamard type involving $h$-convex functions}, Acta. Math. Univ. \textbf{(LXXIX)2} (2010), 265-272.
	
	\bibitem{VAR} S. Varo\v{s}anec, \textit{On $ h $-convexity}, J. Math. Anal. Appl, 326 (2007), 303-311.
	
\end{thebibliography}
\end{document}